\documentclass[10pt, a4paper, english,reqno]{amsart}
\usepackage{amsmath} 
\usepackage{amsthm} 
\usepackage{amssymb} 
\usepackage[dvipsnames]{xcolor}
\usepackage{amscd} 
\usepackage{mathtools}

\usepackage{subcaption}

\usepackage{mlmodern}
\usepackage{array} 
\usepackage[cal=boondoxo,scr=euler]{mathalfa}
\usepackage[backref=page,linktocpage]{hyperref} 
\usepackage{cleveref} 
\usepackage{caption} 
\usepackage{graphics,graphicx} 
\usepackage{tikz,tikz-cd} 

\usetikzlibrary{graphs,graphs.standard,calc}

\usetikzlibrary{decorations.pathreplacing,}

\usepackage{enumerate} 

\DeclareMathAlphabet{\mathsf}{OT1}{\sfdefault}{m}{n}

\newcommand{\nocontentsline}[3]{}
\newcommand{\tocless}[2]{\bgroup\let\addcontentsline=\nocontentsline#1{#2}\egroup}

\usepackage[margin=1.5in]{geometry}

\usepackage{verbatim}

\usepackage{scalerel}

\makeatletter
\def\dual#1{\expandafter\dual@aux#1\@nil}
\def\dual@aux#1/#2\@nil{\begin{tabular}{@{}c@{}}#1\\#2\end{tabular}}
\makeatother

\makeatletter
\@namedef{subjclassname@2020}{\textup{2020} Mathematics Subject Classification}
\makeatother

\DeclareMathAlphabet{\amathbb}{U}{bbold}{m}{n}

\hypersetup{
    colorlinks = true,
    linkbordercolor = {white},
    linkcolor = {MidnightBlue},
    anchorcolor = {black},
    citecolor = {MidnightBlue},
    filecolor = {cyan},
    menucolor = {MidnightBlue},
    runcolor = {cyan},
    urlcolor = {MidnightBlue}
}

\usetikzlibrary{automata}

\newtheoremstyle{teoremas}
{10pt}
{10pt}
{\itshape}
{}
{\bfseries}
{}
{.5em}
{}

\theoremstyle{teoremas}
\newtheorem{theorem}{Theorem}[section]

\newtheorem{proposition}[theorem]{Proposition}

\newtheoremstyle{definition}
{9pt}
{8pt}
{}
{}
{\bfseries}
{}
{.5em}
{}

\theoremstyle{definition}
\newtheorem{definition}[theorem]{Definition}

\newtheorem{problem}[theorem]{Problem}

\newtheorem{example}[theorem]{Example}
\newtheorem{remark}[theorem]{Remark}

\crefname{theorem}{theorem}{theorems}
\Crefname{theorem}{Theorem}{Theorems}
\crefname{lemma}{lemma}{lemmas}
\Crefname{lemma}{Lemma}{Lemmas}
\crefname{proposition}{proposition}{propositions}
\Crefname{proposition}{Proposition}{Propositions}

\DeclareMathOperator{\rk}{rk}

\newcommand{\Q}{\mathsf{Q}}

\newcommand{\Z}{\mathbb{Z}}
\newcommand{\M}{\mathsf{M}}

\newcommand{\U}{\mathsf{U}}

\newcommand{\K}{\mathsf{K}}
\newcommand{\IH}{\operatorname{IH}}
\renewcommand{\H}{\operatorname{H}}

\newcommand{\Ind}{\operatorname{Ind}}

\newcommand{\VRep}{\operatorname{VRep}}

\AtBeginDocument{
   \def\MR#1{}
}

\title[Kazhdan--Lusztig polynomials of Dowling geometries]{Kazhdan--Lusztig polynomials of\\ Dowling geometries}

\author[L. Ferroni]{Luis Ferroni}
\address{(L. Ferroni) Universit\`a di Pisa, Pisa, Italy}
\email{luis.ferroni@unipi.it}

\author[M. Larson]{Matt Larson}
\address{(M. Larson) Princeton University \& Institute for Advanced Study, Princeton, NJ, United States}
\email{mattlarson@princeton.edu}

\subjclass[2020]{Primary: 05B35, 05A15, 55N33, 20C30, 13D40}

\allowdisplaybreaks

\begin{document}

\begin{abstract}
We give a concrete combinatorial interpretation of the coefficients of the Kazhdan--Lusztig polynomials of Dowling geometries, a family of matroids which generalizes braid matroids of types $A$ and $B$. 
Furthermore, we interpret the coefficients of the equivariant Kazhdan--Lusztig polynomials and the equivariant $Z$-polynomials of Dowling geometries associated to non-trivial groups with respect to their full automorphism groups. 
\end{abstract}

\maketitle

\section{Introduction}

Inspired by the classical Kazhdan--Lusztig theory of Coxeter groups in representation theory, Elias, Proudfoot, and Wakefield initiated in \cite{elias-proudfoot-wakefield} an analogous theory for matroids. To every loopless matroid $\M$, one associates a polynomial $P_{\M}(t)\in \mathbb{Z}[t]$ satisfying:
\begin{enumerate}[\normalfont(i)]
    \item If $\rk(\M)=0$, then $P_{\M}(t)=1$.
    \item If $\rk(\M)>0$, then $\deg P_{\M}(t)<\frac{1}{2}\rk(\M)$.
    \item For every matroid $\M$, the polynomial
    \[
        Z_{\M}(t)=\sum_{F\in \mathcal{L}(\M)} t^{\rk(F)}P_{\M/F}(t)
    \]
    is palindromic\footnote{A polynomial $f(t)$ is \emph{palindromic} if $f(t)=t^d f(t^{-1})$, where $d=\deg f$.} of degree $\rk(\M)$.
\end{enumerate}
Here $\mathcal{L}(\M)$ denotes the lattice of flats of $\M$. The polynomial $P_{\M}(t)$ is called the \emph{Kazhdan--Lusztig polynomial} of $\M$, and $Z_{\M}(t)$ is called its \emph{$Z$-polynomial}. The above conditions determine $P_{\M}(t)$ and $Z_{\M}(t)$ uniquely; the existence of these polynomials was established by Proudfoot, Xu, and Young in \cite{PXY}.

As in the classical Kazhdan--Lusztig theory for Weyl groups, these polynomials have a geometric interpretation for realizable matroids. If the matroid $\M$ is represented by a linear subspace $L\subseteq \mathbb{C}^n$, then the coefficients of both $P_{\M}(t)$ and $Z_{\M}(t)$ may be interpreted in terms of intersection cohomology: the $Z$-polynomial is the intersection cohomology Poincar\'e polynomial of the closure of $L$ in $(\mathbb{P}^1)^n$, while the Kazhdan--Lusztig polynomial is the Poincar\'e polynomial of the stalk of the intersection cohomology sheaf at the most singular point $(\infty,\dotsc,\infty)$ of the closure of $L$; see \cite{elias-proudfoot-wakefield,PXY}.

The \emph{braid matroid} $\mathsf{K}_{n+1}$ is the matroid associated to the hyperplane arrangement defined by the type $A_n$ root system. Its lattice of flats is naturally identified with the lattice of set partitions of $\{0,1,\dotsc,n\}$. Since the introduction of Kazhdan--Lusztig polynomials of matroids, a major problem has been to understand these polynomials for braid matroids; see, for example, \cite[Appendix A]{elias-proudfoot-wakefield}, \cite[Section 4]{gedeon-proudfoot-young}, \cite{PY}, \cite[Section 4]{PXY}, \cite{KW}, and \cite[Example 1.4]{TOSTESON}. This problem was, in a certain sense, resolved by the authors of the present paper in \cite{ferroni-larson}. There, we gave a combinatorial interpretation of the coefficients of $P_{\mathsf{K}_{n+1}}(t)$ and $Z_{\mathsf{K}_{n+1}}(t)$ in terms of certain matroids on $[n]=\{1,\dotsc,n\}$, namely \emph{quasi series-parallel matroids}; see Section~\ref{subsec:2.1} for their precise definition. This interpretation led to explicit formulas for the generating functions of these polynomials. These results were subsequently used in \cite{gao-proudfoot-yang-zhang,proudfoot-xu-young-qsp} to obtain simple formulas for the coefficients of Kazhdan--Lusztig and $Z$-polynomials of braid matroids.

\smallskip

There is also an equivariant refinement of this theory. Suppose that a finite group $\Gamma$ acts on $L$ and preserves the hyperplane arrangement induced by the coordinate hyperplanes. Then $\Gamma$ acts on the corresponding intersection cohomology, and also on the stalk at $(\infty,\dotsc,\infty)$. This upgrades the Kazhdan--Lusztig and $Z$-polynomials to polynomials whose coefficients lie in the representation ring of $\Gamma$. In \cite{Gedeon} and \cite{PXY}, this construction is extended beyond the realizable setting: for any matroid equipped with an action of $\Gamma$, one can define the \emph{equivariant Kazhdan--Lusztig polynomial} $P^{\Gamma}_{\M}(t)$ and the \emph{equivariant $Z$-polynomial} $Z^{\Gamma}_{\M}(t)$ by a recursion analogous to the one above; see Definition~\ref{def:equivariantKL}.

The symmetric group $\mathfrak{S}_{n+1}$ acts on $\mathsf{K}_{n+1}$ by permuting the set $\{0,\dotsc,n\}$. Let $\mathfrak{S}_n$ denote the subgroup that fixes $0$. The main result of \cite{ferroni-larson} gives a formula for the coefficients of $P^{\mathfrak{S}_n}_{\mathsf{K}_{n+1}}(t)$ and $Z^{\mathfrak{S}_n}_{\mathsf{K}_{n+1}}(t)$ 
as permutation representations on sets of quasi series-parallel matroids on $[n]$. Somewhat surprisingly, this description does not seem to extend directly to the full $\mathfrak{S}_{n+1}$-equivariant Kazhdan--Lusztig and $Z$-polynomials.

\subsection*{Main result}

The purpose of this paper is to prove a generalization of the main result of \cite{ferroni-larson}, which in turn explains why the subgroup $\mathfrak{S}_n$ is the natural symmetry group in this context. Dowling introduced in \cite{dowling} a construction which associates to every finite group $G$ of order $q$ and every nonnegative integer $n$ a matroid $\Q_n(G)$ of rank $n$ on
\(
    n+q\binom{n}{2}
\)
elements. These matroids are known as \emph{Dowling geometries}. They have played an important role in matroid theory, where they form one of the two families of ``universal objects'' in the sense of \cite{KahnKung}. Equivariant invariants of Dowling geometries have also been studied in several works; see \cite{Hanlon,HanlonGeneralized,BoninAut,Schwartz,Henderson,Margolis,BibbyGadish}.

We shall use the definition of $\Q_n(G)$ given in Dowling's original paper \cite{dowling}, although many other equivalent descriptions are available; see \cite{StanleyGF,Zaslavsky,BoninBogart,BennettBogartBonin}. A \emph{partial $G$-partition} of $[n]$ consists of a subset $S\subseteq [n]$, a function $f\colon S\to G$, and a partition
\(
    S=A_1\sqcup \dotsb \sqcup A_k
\)
of $S$ into unordered nonempty blocks. Two partial $G$-partitions
$(S,f,A_1,\dotsc,A_k)$ and $(T,h,B_1,\dotsc,B_\ell)$ are declared to be equivalent if $S=T$, the two set partitions agree after relabeling of their blocks, and there exist elements $g_1,\dotsc,g_k\in G$ such that
\(
    f|_{A_i}=g_i\cdot h|_{A_i}
\)
for every $i$.

The refinement order on partial $G$-partitions is defined as follows. We say that
\(
    (S,f,A_1,\dotsc,A_k)
\)
refines
\(
    (T,h,B_1,\dotsc,B_\ell)
\)
if the latter is equivalent to a partial $G$-partition obtained from the former by a sequence of operations consisting of merging blocks and deleting blocks. This gives a partial order on partial $G$-partitions. Under this order, the minimal element is represented by the partition of $[n]$ into $n$ singleton blocks, with all choices of $f$ equivalent. The maximal element is represented by the empty subset.

Dowling proved in \cite[Theorem 3]{dowling} that the lattice of partial $G$-partitions is the lattice of flats of a matroid, which we denote by $\Q_n(G)$. As noted above, this matroid has
$n+q\binom{n}{2}$ atoms, with $n$ atoms corresponding to partial partitions with $n-1$ singleton blocks, and $q\binom{n}{2}$ atoms corresponding to partial partitions with $n-2$ singleton blocks and one block of size $2$. When $G$ is trivial, $\Q_n(G)$ is the braid matroid $\K_{n+1}$, since partitions of subsets of $[n]$ may be identified with partitions of $\{0,1,\dotsc,n\}$. When $G=\mathbb{Z}/2\mathbb{Z}$, the Dowling geometry $\Q_n(G)$ is the \emph{type B braid matroid}, namely the matroid of the type B braid arrangement, whose hyperplanes in $\mathbb{C}^n$ are given by the $n^2$ equations of the form $x_i\pm x_j = 0$ for $1\leq i \leq j \leq n$.
More generally, if $G=\mathbb{Z}/m\mathbb{Z}$, then $\Q_n(G)$ is the matroid of the reflection arrangement associated to the complex reflection group $G(m,1,n)$.

There are several natural symmetries of $\Q_n(G)$. The group $\mathfrak{S}_n$ acts by permuting the underlying set $[n]$, the group $G^n$ acts on the right on the $G$-valued functions, and $\operatorname{Aut}(G)$ acts by applying automorphisms to the values of these functions. Let $\Gamma$ be the subgroup of $\operatorname{Aut}(\Q_n(G))$ generated by these actions; see Section~\ref{ssec:automorphisms} for an explicit description. When $G$ is a nontrivial group and $n \ge 3$, Bonin \cite{BoninAut} proved that
\begin{equation}\label{eq:Gamma-is-Aut}
    \Gamma=\operatorname{Aut}(\Q_n(G)),
\end{equation}
and when $G$ is trivial, $\Gamma$ is the subgroup $\mathfrak{S}_n$ of
\(
    \operatorname{Aut}(\mathsf{K}_{n+1})=\mathfrak{S}_{n+1}.
\)

In this paper, we introduce a notion of group-labeled matroid. Let $G$ be a finite group. A \emph{$G$-labeling} of a matroid $\M$ on a ground set $E$ is a function
\(
    \phi\colon E\to G.
\)
Two $G$-labelings $\phi,\psi\colon E\to G$ are said to be equivalent if, for every connected component $C$ of $\M$, there is an element $g_C\in G$ such that
\(
    \phi|_C=g_C\cdot \psi|_C.
\)

Let $\mathcal{A}_G(n,r)$ be the set of all equivalence classes of $G$-labeled quasi series-parallel matroids on $[n]$ of rank $r$, and let $\mathcal{S}_G(n,r)$ be the subset consisting of the simple quasi series-parallel matroids. These sets carry natural actions of $\Gamma$, described in Section~\ref{ssec:automorphisms}.

\begin{theorem}\label{thm:main}
The coefficient of $t^i$ in the $\Gamma$-equivariant Kazhdan--Lusztig polynomial of $\Q_n(G)$ is the permutation representation on $\mathcal{S}_G(n,n-i)$. The coefficient of $t^i$ in the $\Gamma$-equivariant $Z$-polynomial of $\Q_n(G)$ is the permutation representation on $\mathcal{A}_G(n,n-i)$.
\end{theorem}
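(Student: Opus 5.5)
We follow the strategy of \cite{ferroni-larson}: verify that the proposed representations satisfy the defining conditions of the equivariant Kazhdan--Lusztig and $Z$-polynomials, and then invoke uniqueness. Let $P(t)=\sum_i [\mathcal{S}_G(n,n-i)]\,t^i$ and $Z(t)=\sum_i [\mathcal{A}_G(n,n-i)]\,t^i$, viewed in the representation ring of $\Gamma$ with coefficients the permutation representations. The argument has three steps.

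\textbf{Step 1 (reduction to contractions).} For a flat $F=(S,f,A_1,\dotsc,A_k)$ of $\Q_n(G)$, the contraction $\Q_n(G)/F$ is again a Dowling geometry $\Q_k(G)$. Its ground set is the set of blocks, and the $G$-labels are transported via $f$. The stabilizer $\Gamma_F$ maps to the corresponding group $\Gamma$ for $\Q_k(G)$, and it also acts on the factor $\mathfrak{S}_S$-type data of $F$. The equivariant defining recursion (Definition~\ref{def:equivariantKL}) reads
\[
Z^\Gamma_{\Q_n(G)}(t)=\sum_{[F]\in \mathcal{L}/\Gamma} t^{\rk F}\,\Ind_{\Gamma_F}^{\Gamma} P^{\Gamma_F}_{\Q_n(G)/F}(t).
\]
Assuming $P$ is correct for smaller $n$ by induction, we must show that $\mathcal{A}_G(n,r)$ decomposes $\Gamma$-equivariantly as a disjoint union over flats $F$ of sets in bijection with simple $G$-labeled QSP matroids of the right rank on the block set of $F$.

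\textbf{Step 2 (the combinatorial decomposition).} Here we lift the bijection from \cite{ferroni-larson}: there, a QSP matroid on $[n]$ determines a flat of $\K_{n+1}$ via its parallel classes and its loops, and its simplification is a simple QSP matroid on the parallel classes. For a $G$-labeled QSP matroid $(\M,\phi)$, we set
\begin{itemize}
\item $S$ to be the set of non-loops,
\item the blocks $A_i$ to be the parallel classes,
\item $f=\phi|_S$.
\end{itemize}
The equivalence of labelings is per connected component, and each parallel class lies in one component. We therefore need to check that rescaling on a component induces an equivalence of partial $G$-partitions, and conversely that the residual labeling on the simplification, namely one label per class up to componentwise scaling, is exactly a $G$-labeling of the simplification on the block set. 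The point requiring care is that labels within a block are recorded by $F$ only up to a per-block scalar, whereas the simplification records the block label up to a per-component scalar. Together these recover the labeling up to per-component scaling, so the two pieces of data match. Equivariance under $\mathfrak{S}_n$, $G^n$ and $\operatorname{Aut}(G)$ is then checked directly, and the rank shift gives $t^{\rk F}$. Taking permutation representations, induced representations come from $\Gamma$-orbits of such pairs, and this yields the recursion with $P$ in place of $P^{\Gamma_F}$.

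\textbf{Step 3 (degree bound and palindromicity).} It remains to show that $\mathcal{S}_G(n,n-i)$ is empty for $i\ge n/2$ when $n>0$, and that $[\mathcal{A}_G(n,r)]\cong[\mathcal{A}_G(n,n-r)]$ as $\Gamma$-sets or $\Gamma$-representations.
\begin{itemize}
\item \emph{Degree bound.} This follows from the non-equivariant fact about simple QSP matroids, since $G$-labels do not affect the matroid.
\item \emph{Palindromicity.} The natural candidate is matroid duality, which preserves QSP matroids and connected components and hence is compatible with $G$-labelings up to componentwise equivalence. However, duality sends rank $r$ to rank $n-r$ only for matroids on $[n]$, whereas $\mathcal{A}_G$ includes loops. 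In \cite{ferroni-larson}, loops and coloops were exchanged, and dual QSP matroids remain QSP, so this should go through. We only need to check that the $\Gamma$-action commutes with dualizing a labeled matroid; this holds because all generators of $\Gamma$ act on labels and positions only.
\end{itemize}

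I expect Step 2 to be the main obstacle. The delicate points are the precise matching of the two notions of equivalence, per block versus per component, and the verification that the stabilizer actions agree, so that the orbit sums really produce induced representations. Once these are settled, uniqueness of the equivariant polynomials completes the proof.
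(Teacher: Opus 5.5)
Your proposal is correct and follows the paper's proof. Both argue by induction on $n$, get the degree bound from the rank bound for simple quasi series-parallel matroids, get palindromicity from duality, and verify the recursion by sending a labeled quasi series-parallel matroid to the flat $(S,\phi|_S,\text{parallel classes})$ and matching each fiber with labeled simple quasi series-parallel matroids on the blocks; the paper phrases that last matching via transitivity of $\Gamma$ on labelings of a fixed matroid and equality of stabilizers, rather than your per-block/per-component bookkeeping.

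Two small points. First, it is the simplification of $\Q_n(G)/F$, not the contraction itself, that is $\Q_k(G)$; this is harmless, since equivariant Kazhdan--Lusztig polynomials only depend on the lattice of flats. Second, your worry about loops in the duality step is moot: every matroid in $\mathcal{A}_G(n,r)$ already lives on $[n]$, so duality directly gives a $\Gamma$-equivariant bijection $\mathcal{A}_G(n,r)\to\mathcal{A}_G(n,n-r)$.
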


Passing to dimensions, we obtain the following enumerative interpretation. The coefficients of the Kazhdan--Lusztig polynomial of $\Q_n(G)$ count simple quasi series-parallel matroids on $[n]$, while the coefficients of the $Z$-polynomial count quasi series-parallel matroids on $[n]$, in both cases with a weight $q^{n-c(\M)}$
assigned to a matroid $\M$. Here $q=|G|$ and $c(\M)$ denotes the number of connected components of $\M$. In particular, the ordinary polynomials $P_{\Q_n(G)}(t)$ and $Z_{\Q_n(G)}(t)$ depend only on the order of $G$. This is consistent with the fact that Kazhdan--Lusztig and $Z$-polynomials are valuative invariants of matroids \cite[Theorem 8.9]{ArdilaSanchez}, \cite[Theorem~9.3]{ferroni-schroter}, together with the fact that the class of $\Q_n(G)$ in the symmetrized valuative group depends only on $q$ \cite[Proposition~4.8]{BoninKung}.

\subsection*{Acknowledgements} We have benefited from conversations about Kazhdan--Lusztig polynomials of braid matroids with June Huh, Trevor Karn, Jacob P. Matherne, Nick Proudfoot, Lorenzo Vecchi, and Max Wakefield. The first author is a member of the INdAM research group GNSAGA. This work was conducted while the second author was at the Institute for Advanced Study, where he is supported by the Charles Simonyi Endowment and the Oswald Veblen Fund.

\section{Background}

Throughout the paper we use the standard language and basic results of matroid theory. Our notation and conventions largely follow Oxley~\cite{oxley}.

\subsection{(Quasi)~series-parallel matroids}\label{subsec:2.1}

Series-parallel matroids form a fundamental family of matroids which originate in graph theory and in the theory of electrical networks. The term ``series-parallel'' is used in several nearby settings, including graph theory and enumerative combinatorics, where it sometimes refers to closely related but slightly different classes of objects. In order to separate these conventions, we introduced in \cite{ferroni-larson} the term \emph{quasi series-parallel} for the class of matroids which we now recall.

Two basic operations underlie the construction of series-parallel matroids. Let $\M$ and $\M'$ be matroids on ground sets $E$ and $E'$, respectively. We say that $\M'$ is a \emph{parallel extension} of $\M$ if there is an element $e\in E'$ such that $e$ belongs to a circuit of cardinality $2$ in $\M'$ and $\M' \setminus e=\M$. Dually, $\M'$ is called a \emph{series extension} of $\M$ if there is an element $e\in E'$ such that $e$ belongs to a cocircuit of cardinality $2$ in $\M'$ and $\M'/e = \M$. Notice, for example, that if $\M = \U_{1,1}$ and $\M'=\U_{2,2}$, then $\M'$ is \emph{not} a series extension of $\M$.

A \emph{series-parallel matroid} is a matroid obtained from either a single loop or a single coloop by performing a finite, possibly empty, sequence of series and parallel extensions. Equivalently, every series-parallel matroid with at least two elements is obtained from $\U_{1,2}$ by such a sequence.

\begin{definition}
    A \emph{quasi series-parallel matroid} is a matroid whose connected components are all series-parallel matroids.
\end{definition}

Several equivalent descriptions of quasi series-parallel matroids were given in \cite[Proposition~2.6]{ferroni-larson}. In particular, they can be characterized by excluded minors: a matroid $\M$ is quasi series-parallel if and only if it has no minor which is isomorphic to $\mathsf{K}_4$ or $\U_{2,4}$.

\subsection{Group-labeled matroids}

Motivated by Dowling's definition of the matroids $\Q_n(G)$, we introduce the following notion of group labeled matroids.

\begin{definition}
    Let $G$ be a finite group. A \emph{$G$-labeling} of a matroid $\M$ on a ground set $E$ is a map $\phi \colon E\to G$, where two $G$-labelings $\phi,\psi \colon E\to G$ are considered to be equivalent if, for each connected component $C$ of $\M$, there is a group element $g_C\in G$ such that $\phi|_C = g_C\cdot \psi|_C$.
\end{definition}

\begin{example}
    Consider $G\cong \Z/2\Z$, viewed as the group on two elements $\{-1,1\}$ under multiplication. The graphic matroid induced by the graph depicted in Figure~\ref{fig:example} admits exactly $16$ equivalence classes of $G$-labelings. In Figure~\ref{fig:g-labelings-example} we depict the $16$ equivalence classes of $G$-labelings, choosing representatives in such a way that the number of edges labeled with a $+1$ is always greater than the number of edges labeled with a $-1$. 
    \begin{figure}[ht]
    \centering
    \begin{tikzpicture}
        [scale=0.8,auto=center,
         every node/.style={circle, fill=black, inner sep=1.7pt},
         every edge/.append style = {thick}]
        \tikzstyle{edges} = [thick];

        \node[] (bl) at (0,0) {};
        \node[] (br) at (2,0) {};
        \node[] (tr) at (2,2) {};
        \node[] (tl) at (0,2) {};

        \draw[edges] (bl) -- (br);
        \draw[edges] (br) -- (tr);
        \draw[edges] (tr) -- (tl);
        \draw[edges] (tl) -- (bl);
        \draw[edges] (tl) -- (br);
    \end{tikzpicture}
    \caption{A graphic matroid $\M$.}
    \label{fig:example}
\end{figure}
    
\begin{figure}[ht]
\centering

\newcommand{\signedsquarediag}[5]{%
\begin{scope}[scale=0.55]
    \tikzstyle{edges}=[thick]
    \tikzstyle{vertex}=[circle, fill=black, inner sep=1.7pt]
    \tikzstyle{sign}=[inner sep=1pt, font=\scriptsize]

    \coordinate (BL) at (0,0);
    \coordinate (BR) at (2,0);
    \coordinate (TR) at (2,2);
    \coordinate (TL) at (0,2);

    \draw[edges] (BL) -- (BR) node[midway, below=3pt, sign] {$#1$};
    \draw[edges] (BR) -- (TR) node[midway, right=3pt, sign] {$#2$};
    \draw[edges] (TR) -- (TL) node[midway, above=3pt, sign] {$#3$};
    \draw[edges] (TL) -- (BL) node[midway, left=3pt, sign] {$#4$};
    \draw[edges] (TL) -- (BR) node[pos=.55, above right=-1pt, sign] {$#5$};

    \node[vertex] at (BL) {};
    \node[vertex] at (BR) {};
    \node[vertex] at (TR) {};
    \node[vertex] at (TL) {};
\end{scope}
}

\resizebox{\textwidth}{!}{%
\begin{tikzpicture}

\begin{scope}[shift={(0,0)}]
    \signedsquarediag{+}{+}{+}{+}{+}
\end{scope}

\begin{scope}[shift={(2.4,0)}]
    \signedsquarediag{-}{+}{+}{+}{+}
\end{scope}

\begin{scope}[shift={(4.8,0)}]
    \signedsquarediag{+}{-}{+}{+}{+}
\end{scope}

\begin{scope}[shift={(7.2,0)}]
    \signedsquarediag{+}{+}{-}{+}{+}
\end{scope}

\begin{scope}[shift={(9.6,0)}]
    \signedsquarediag{+}{+}{+}{-}{+}
\end{scope}

\begin{scope}[shift={(12.0,0)}]
    \signedsquarediag{+}{+}{+}{+}{-}
\end{scope}

\begin{scope}[shift={(14.4,0)}]
    \signedsquarediag{-}{-}{+}{+}{+}
\end{scope}

\begin{scope}[shift={(16.8,0)}]
    \signedsquarediag{-}{+}{-}{+}{+}
\end{scope}

\begin{scope}[shift={(0,-2.6)}]
    \signedsquarediag{-}{+}{+}{-}{+}
\end{scope}

\begin{scope}[shift={(2.4,-2.6)}]
    \signedsquarediag{-}{+}{+}{+}{-}
\end{scope}

\begin{scope}[shift={(4.8,-2.6)}]
    \signedsquarediag{+}{-}{-}{+}{+}
\end{scope}

\begin{scope}[shift={(7.2,-2.6)}]
    \signedsquarediag{+}{-}{+}{-}{+}
\end{scope}

\begin{scope}[shift={(9.6,-2.6)}]
    \signedsquarediag{+}{-}{+}{+}{-}
\end{scope}

\begin{scope}[shift={(12.0,-2.6)}]
    \signedsquarediag{+}{+}{-}{-}{+}
\end{scope}

\begin{scope}[shift={(14.4,-2.6)}]
    \signedsquarediag{+}{+}{-}{+}{-}
\end{scope}

\begin{scope}[shift={(16.8,-2.6)}]
    \signedsquarediag{+}{+}{+}{-}{-}
\end{scope}

\end{tikzpicture}
}

\caption{The sixteen equivalence classes of $\{\pm 1\}$-labelings on $\M$.}
\label{fig:g-labelings-example}
\end{figure}
\end{example}

Note that, for each group $G$ of size $q$ and each matroid $\M$ on $n$ elements, there are exactly $q^{n-c(\M)}$ equivalence classes of $G$-labelings of $\M$, where $c(\M)$ stands for the number of connected components of $\M$.

\subsection{Automorphisms}\label{ssec:automorphisms}

We first recall the structure of the group $\operatorname{Aut}(\Q_n(G))$ of all automorphisms of $\Q_n(G)$, following Bonin \cite{BoninAut}. When $G$ is trivial and $n \ge 2$, $\operatorname{Aut}(\Q_n(G))$ is $\mathfrak{S}_{n+1}$. If $G$ is nontrivial, then we will describe the automorphism group of $\Q_n(G)$ as a quotient of $(G \wr \mathfrak{S}_n) \rtimes \operatorname{Aut}(G)$, the semidirect product of the wreath product of $G$ with $\mathfrak{S}_n$ and the automorphism group of $G$. As a set, the group $(G \wr \mathfrak{S}_n) \rtimes \operatorname{Aut}(G)$ can be identified with $G^n \times \mathfrak{S}_n \times \operatorname{Aut}(G)$. The multiplication is given by
\begin{multline*}
((g_1, \dotsc, g_n), \sigma,  \phi) \cdot ((h_1, \dotsc, h_n), \tau, \psi) = \\((\phi(h_{1}) g_{\tau(1)}, \phi(h_2) g_{\tau(2)}, \dotsc, \phi(h_n) g_{\tau(n)}), \sigma \tau, \phi \psi),
\end{multline*} 
where $g_1, \dotsc, g_n, h_1, \dotsc, h_n$ lie in $G$, $\sigma$ and $\tau$ lie in $\mathfrak{S}_n$, and $\phi$ and $\psi$ lie in $\operatorname{Aut}(G)$. 

We describe the action of the group $(G \wr \mathfrak{S}_n) \rtimes \operatorname{Aut}(G)$ on partial $G$-partitions. Given a partial $G$-partition $(S, f, A_1, \dotsc, A_k)$ and an element $((g_1, \dotsc, g_n), \sigma,  \phi)$, we have an associated partial $G$-partition given by $\sigma(S) = \sigma(A_1) \sqcup \dotsb \sqcup \sigma(A_k)$ and a function given by
\begin{equation}\label{eq:action}
(((g_1, \dotsc, g_n), \sigma,  \phi) \cdot f)(i) = \phi(f(\sigma^{-1}(i)))g_{\sigma^{-1}(i)}.
\end{equation}

This action descends to an action on equivalence classes of partial $G$-partitions which respects the partial order. 
Let $N$ be the subgroup of $(G \wr \mathfrak{S}_n) \rtimes \operatorname{Aut}(G)$ given by elements $\{((g, \dotsc, g), \operatorname{id}, \operatorname{inn}_g)\}$, where $\operatorname{inn}_g$ is the automorphism given by $\operatorname{inn}_g(h) = g h g^{-1}$. Then $N$ is a normal subgroup of $(G \wr \mathfrak{S}_n) \rtimes \operatorname{Aut}(G)$,  and $N$ acts trivially on equivalence classes of partial $G$-partitions. 

Recall that $\Gamma$ is the image of $(G \wr \mathfrak{S}_n) \rtimes \operatorname{Aut}(G)$ in $\operatorname{Aut}(\Q_n(G))$. 
In \cite[Theorem~2.1]{BoninAut}, Bonin shows that if $|G| > 1$ and $n \ge 3$, then $\operatorname{Aut}(\Q_n(G)) = \Gamma$. 
The group $\operatorname{Aut}(\Q_n(G))$ was further studied in \cite{Schwartz}. 

\begin{example}
Let $G = \mathbb{Z}/2\mathbb{Z}$. Then $(G \wr \mathfrak{S}_n) \rtimes \operatorname{Aut}(G) = \left(\mathbb{Z}/2\mathbb{Z}\right)^n \rtimes \mathfrak{S}_n$ is the Weyl group of type $B_n$. The element $((-1, \dotsc, -1), \operatorname{id})$ acts nontrivially on the vector space underlying the type $B_n$ braid arrangement, but it preserves all of the hyperplanes, so the automorphism group of the type $B_n$ braid matroid is the quotient of the Weyl group of type $B_n$ by the subgroup generated by this element. 
\end{example}

There is an action of $\Gamma$ on equivalence classes of $G$-labeled matroids. First, note that there is an action of $(G \wr \mathfrak{S}_n) \rtimes \operatorname{Aut}(G)$ on the set of all functions $f \colon [n] \to G$ given by the same formula as in \eqref{eq:action}. 
The action by the subgroup $N$ preserves equivalence, so this descends to an action of $\Gamma$ on equivalence classes. 

\subsection{Equivariant Kazhdan--Lusztig polynomials}

We recapitulate the definition of equivariant KL and $Z$-polynomials. This definition was first stated in this form in \cite[Appendix~A]{braden-huh-matherne-proudfoot-wang2}, but the objects were considered in \cite{gedeon-proudfoot-young,Equivariant}. For a group $H$, let $\VRep(H)$ be the ring of virtual representations of $H$. 

\begin{definition}\label{def:equivariantKL}
    There is a unique assignment associating to each action $H \curvearrowright \M$ on a loopless matroid\footnote{The action is required to send flats to flats.} a polynomial $P_\M^H(t) \in \VRep(H)[t]$ satisfying
\begin{enumerate}[\normalfont(i)]
    \item If $\rk(\M) = 0$, then $P_\M^{H}(t) = \operatorname{tr}$ is the trivial representation.
    \item If $\rk(\M) > 0$, then $\deg P_\M^H(t) < \frac{1}{2}\rk(\M)$.
    \item For every $\M$, the polynomial
    \[Z_\M^H(t) = \sum_{[F] \in \mathcal{L}(\M)/H} t^{\rk(F)} \Ind_{H_F}^H P_{\M/F}^{H_F}(t)\]
    is palindromic of degree $\rk(\M)$.
\end{enumerate}
Here, $H_F$ denotes the stabilizer of $F$, and $\mathcal{L}(\M)/H$ is the set of orbits of flats of $\M$ under the action of $H$. The polynomial $P^{H}_{\M}(t)$ is called the \emph{equivariant Kazhdan--Lusztig polynomial}, and $Z^{H}_{\M}(t)$ is called the \emph{equivariant $Z$-polynomial}. 
\end{definition}

\section{Generating functions}

We recapitulate some results about the enumeration of quasi series-parallel matroids obtained in our prequel \cite{ferroni-larson}. Further computational results were obtained by Proudfoot, Xu, and Young in \cite{proudfoot-xu-young-qsp}. Throughout we shall assume that the reader has some acquaintance with generating functions and refer to \cite{stanley-ec2} for further details. This section is not used in the proof of Theorem~\ref{thm:main}, but it provides a powerful tool for computing Kazhdan--Lusztig and $Z$-polynomials of Dowling geometries.

\begin{proposition}[{\cite[Proposition~2.3]{ferroni-larson}}]
    Let $\mathcal{C}(n,k)$ denote the set of series-parallel matroids on $[n]$ of rank $k$. Then the bivariate generating function 
    \[ C(x,y) := \sum_{n=1}^{\infty} \sum_{k=0}^n |\mathcal{C}(n,k)| \frac{x^n}{n!} \, y^k ,\]
    is given by
    \begin{equation}\label{eq:C-gen-fun}
        C(x,y) = x(y+1) + y\int \left[ \left( \frac{1}{y} \log(1+xy) + \log(1+x) - x\right)^{\left<-1\right>}\right] dx.
    \end{equation}
    Here the symbol $\left<-1\right>$ stands for the compositional inverse of the function inside the parenthesis with respect to the variable $x$, i.e., treating $y$ as a constant.
\end{proposition}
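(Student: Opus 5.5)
The statement "immediately above" is the cited Proposition (from the prequel): the exponential generating function $C(x,y)$ of series-parallel matroids on $[n]$ counted by rank. My plan is to derive a functional equation for $C$ from the recursive structure of connected series-parallel matroids, and then solve it.

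\textbf{Step 1: decomposition.} A connected series-parallel matroid on at least two elements is either a parallel connection or a series connection of smaller connected series-parallel matroids, and never both. Concretely, if $\M$ is connected with $|E|\ge 2$, then exactly one of $\M$ and $\M^*$ is not $3$-connected in the "parallel" sense: it decomposes as a $2$-sum tree whose root node is a $\U_{1,m}$ or a $\U_{m-1,m}$.

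I will set up two auxiliary exponential generating functions:
\begin{itemize}
\item $P(x,y)$, counting connected series-parallel matroids (on at least two elements) whose top-level decomposition is parallel;
\item $S(x,y)$, counting those whose top-level decomposition is series.
\end{itemize}
Duality sends rank $k$ on $n$ elements to rank $n-k$. This gives $S(x,y)=P(xy,y^{-1})$ up to the appropriate rescaling. The single elements contribute $x$ (loop) and $xy$ (coloop). This accounts for the term $x(y+1)$, and the remaining part is $C-x(y+1)=P+S$.

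\textbf{Step 2: functional equation.} The pieces of a parallel composition are either single elements or series-type pieces. Because of the basepoint structure of the $2$-sum, it is cleaner to work with the derivative, i.e. marked (rooted) objects, and compose along a distinguished element. A parallel class in rank one corresponds to a set partition in which each block is a single element or a series piece. Writing $u=\partial_x$ of the rooted series-type function and using the exponential formula, the parallel-type rooted function is obtained as $\exp$ of (single element plus series-type) minus correction terms. Symmetrically for the series side, with weight $y$ per added rank.

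Eliminating $P$ gives a single equation of the form
\[
\frac{1}{y}\log(1+wy)+\log(1+w)-w = x, \qquad w=\tfrac{1}{y}\,\partial_x\bigl(C-x(y+1)\bigr).
\]
Here $\log(1+w)$ comes from the parallel side, $\frac1y\log(1+wy)$ from the series side with rank weights, and $-w$ corrects for double counting of the marked element.

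\textbf{Step 3: inversion.} The equation says $w$ is the compositional inverse in $x$ of $\frac1y\log(1+xy)+\log(1+x)-x$. Integrating in $x$ and multiplying by $y$ then gives \eqref{eq:C-gen-fun}.

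\textbf{Main obstacle.} I expect the hard part to be Step 2: getting the rooted bookkeeping exactly right so that the series and parallel contributions produce precisely the logarithms with the correct rank weights, including the $-x$ correction. I would first check the resulting equation against small cases, namely $n\le 3$, where the counts come from $\U_{1,2}$, $\U_{1,3}$, and $\U_{2,3}$.
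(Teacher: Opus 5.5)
The paper does not prove this statement; it is quoted from \cite{ferroni-larson}, so I am judging your argument on its own. Your strategy is the right one: pass to $\partial_x C$ (that is, root at an extra element $*$), decompose by parallel and series connection at $*$, apply the exponential formula, and invert. Your final equation is also correct; it gives $w=x+(1+y)\frac{x^2}{2!}+(1+6y+y^2)\frac{x^3}{3!}+\dotsb$, which matches the rooted matroids on $2$, $3$ and $4$ elements. However, Step~1 is false as stated, and Step~2, which is the entire proof, is only asserted. An unrooted connected series-parallel matroid has no ``top-level'' decomposition, because its $2$-sum tree has no root. For example, take the rank-$2$ matroid on $\{a,b,c,d\}$ with $\{a,b\}$ parallel. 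Here $\M/a$ is disconnected, but so is $\M\setminus c$, so whether it is of ``parallel'' or ``series'' type depends on the element you root at. Hence the split $C-x(y+1)=P+S$ and the relation $S(x,y)=P(xy,y^{-1})$ are meaningless. Rooting is forced, not merely cleaner, and duality is not needed at all.

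Here is what Step~2 must contain. Put $w=\sum_{n\ge1}\sum_{\M}y^{\rk(\M)-1}x^n/n!$, summed over series-parallel matroids on $[n]\sqcup\{*\}$, so that $\partial_xC=1+y+yw$. Let $A$ (resp.\ $B$) be the same sum restricted to those $\M$ with $\M\setminus *$ (resp.\ $\M/*$) connected. You need three facts.
\begin{enumerate}
\item[(a)] For $n\ge 2$, exactly one of $\M/*$ and $\M\setminus *$ is disconnected. Tutte's theorem gives that at least one is connected. The other half is where the series-parallel hypothesis enters, since it fails for $\U_{2,4}$; it follows, e.g., from the canonical $2$-sum decomposition, all of whose nodes are circuits or cocircuits. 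Consequently $A+B=w+x$, the only common object being the rooted $\U_{1,2}$.
\item[(b)] If $C_1,\dots,C_k$ are the components of $\M/*$, then $\M$ is the parallel connection at $*$ of the matroids $\M|(C_i\cup *)$. Each of these lies in $B$, and $\rk\M=\sum_i\rk(\M|(C_i\cup *))-(k-1)$.
\item[(c)] Dually, $\M$ is the series connection of pieces from $A$ along the components of $\M\setminus *$, and ranks add.
\end{enumerate}
The pieces are determined by the components, and series-parallel matroids are closed under these connections. So the exponential formula gives
\[
1+w=e^{B},\qquad 1+yw=e^{yA},
\]
where the factor $y^{k-1}$ in the series case comes from the weight $y^{\rk-1}$. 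Substituting $B=\log(1+w)$ and $A=\frac1y\log(1+yw)$ into $A+B-w=x$ yields your equation. Your description of the parallel-type part as ``$\exp$ of (single element plus series-type) minus corrections'' is right in spirit. But the $-w$ is inclusion--exclusion for the rooted $\U_{1,2}$ (the term $x$), which lies in both $A$ and $B$; it has nothing to do with double counting the marked element.
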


This generating function allowed us to deduce in \cite[Proposition~2.8]{ferroni-larson} a bivariate generating function $A(x,y) = \exp(C(x,y))$ for the number of quasi series-parallel matroids on a fixed ground set and fixed rank, in particular also obtaining a generating function for the $Z$-polynomial of braid matroids depending on their rank.

The following result provides a generalization for arbitrary Dowling geometries.

\begin{proposition}\label{prop:gen-fun-AG}
    Let $\mathcal{A}_G(n,k)$ denote the set of all $G$-labeled quasi series-parallel matroids on $[n]$ of rank $k$. Then the bivariate generating function 
    \[ A_G(x,y) := \sum_{n=0}^{\infty} \sum_{k=0}^n |\mathcal{A}_G(n,k)| \frac{x^n}{n!} \, y^k ,\]
    is given by
    \begin{equation}\label{eq:A-gen-fun}
        A_G(x,y) = \exp(C(qx,y))^{1/q} = A(qx,y)^{1/q},
    \end{equation}
    where $C(x,y)$ is given as in equation~\eqref{eq:C-gen-fun} and $q = |G|$.
\end{proposition}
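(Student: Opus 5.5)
We count $G$-labeled quasi series-parallel matroids through their connected components and then apply the exponential formula. A quasi series-parallel matroid on $[n]$ is the same thing as a set partition of $[n]$ together with a series-parallel matroid on each block. Its rank is the sum of the ranks of the components.

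By the definition of equivalence, an equivalence class of $G$-labelings of $\M$ is a tuple of independent choices, one per connected component $C$: a $G$-labeling of $C$ up to left multiplication by a single element of $G$. The action of $G$ on $G^C$ by left multiplication is free, so for a component with $m$ elements there are exactly $q^{m}/q = q^{m-1}$ classes. This matches the count $q^{n-c(\M)}$ noted earlier. Hence the structures are built from "connected" pieces of size $m$ and rank $k$, each weighted by $|\mathcal{C}(m,k)|\,q^{m-1}$.

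The exponential generating function of the connected weighted pieces is
\[
\sum_{m\ge 1}\sum_{k} |\mathcal{C}(m,k)|\, q^{m-1}\frac{x^m}{m!}y^k = \frac{1}{q}\, C(qx,y).
\]
The exponential formula then gives
\[
A_G(x,y)=\exp\Bigl(\tfrac1q C(qx,y)\Bigr)=\exp(C(qx,y))^{1/q}.
\]
The identification with $A(qx,y)^{1/q}$ follows from $A(x,y)=\exp(C(x,y))$, which is \cite[Proposition~2.8]{ferroni-larson} and was recalled above.

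The only point needing care is that the weight attached to a component depends only on that component, so that the exponential formula applies. This holds because equivalence is imposed componentwise and the components of a quasi series-parallel matroid are exactly the series-parallel blocks. A second point is the convention for the empty matroid: it contributes the constant term $1$, consistent with $\exp$ evaluated at $0$.
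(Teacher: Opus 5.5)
Your proposal is correct and follows essentially the same route as the paper. Both arguments count $q^{m-1}$ labeling classes per connected component of size $m$ to get the connected generating function $\tfrac{1}{q}C(qx,y)$, then apply the exponential formula together with $A(x,y)=\exp(C(x,y))$.
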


\begin{proof}
Let
\[
C_G(x,y):=\sum_{n\ge 1}\sum_{k=0}^n c_G(n,k)\frac{x^n}{n!}\,y^k
\]
be the bivariate exponential generating function for connected $G$-labeled quasi
series-parallel matroids. By \cite[Proposition~2.8]{ferroni-larson},
\[
A(x,y)=\exp(C(x,y)),
\]
so $C(x,y)$ is the connected part of the species of quasi series-parallel matroids.

If $\M$ is connected and has $n$ elements, then $\M$ has exactly $q^{n-1}$ distinct
$G$-labelings, since two labelings are identified precisely when they differ by
left multiplication by a single element of $G$ on the unique connected component.
Therefore
\[
C_G(x,y)=\frac{1}{q}\,C(qx,y).
\]
Now an arbitrary $G$-labeled quasi series-parallel matroid can be constructed from a set of connected
ones, so the exponential formula gives
\[
A_G(x,y)=\exp(C_G(x,y))
=\exp\!\left(\frac{1}{q}C(qx,y)\right)
=\exp(C(qx,y))^{1/q}
=A(qx,y)^{1/q}.\qedhere\]
\end{proof}

If we restrict our enumeration to $G$-labeled \emph{simple} quasi series-parallel matroids, we were led\footnote{Actually, in \cite[Proposition~2.14]{ferroni-larson} we were led to the generating function $S(x,y) = \frac{1}{x+1} A(\log(x+1),y) - 1$, that is, we did not count the empty matroid as a simple quasi series-parallel matroid. This difference is inessential for our computations, and our subsequent formulas will be much simpler if we include the empty matroid in the count.} in \cite[Proposition~2.14]{ferroni-larson} to the generating function $S(x,y) = \frac{1}{x+1}A(\log(x+1),y)$, and therefore to the generating function for Kazhdan--Lusztig polynomials of braid matroids. We have the following generalization to arbitrary Dowling geometries.

\begin{proposition}\label{prop:gen-fun-S}
    Let $\mathcal{S}_G(n,k)$ denote the set of all $G$-labeled simple quasi series-parallel matroids on $[n]$ of rank $k$. Then the bivariate generating function 
    \[ S_G(x,y) := \sum_{n=0}^{\infty} \sum_{k=0}^n |\mathcal{S}_G(n,k)| \frac{x^n}{n!} \, y^k ,\]
    is given by
    \begin{equation}\label{eq:S-gen-fun}
        S_G(x,y) = \left(\frac{1}{qx+1} A(\log(qx+1),y)\right)^{1/q} = S(qx,y)^{1/q},
    \end{equation}
    where $A(x,y)=\exp(C(x,y))$ and $q = |G|$.
\end{proposition}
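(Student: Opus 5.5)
We argue exactly as in the proof of Proposition~\ref{prop:gen-fun-AG}, but now for the simple objects, and then compare with the formula $S(x,y)=\frac{1}{x+1}A(\log(x+1),y)$ from \cite[Proposition~2.14]{ferroni-larson} (where the empty matroid is counted). The key observation is that simplicity is a property of the underlying matroid only, and that the number of equivalence classes of $G$-labelings of a matroid $\M$ on $n$ elements is $q^{n-c(\M)}$. Here $c(\M)$ is additive over connected components.

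First, we write $S(x,y)=\exp(D(x,y))$, where $D$ is the exponential generating function of \emph{connected} simple quasi series-parallel matroids. This is valid by the exponential formula: a matroid is simple if and only if each of its connected components is simple, and a quasi series-parallel matroid is a disjoint union of connected series-parallel ones. (We may regard the connected component consisting of a single coloop as simple, and a single loop as not simple.) Since the constant term is $1$ because the empty matroid is included, the logarithm is well defined.

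Next, each connected simple matroid on $n$ elements carries $q^{n-1}$ classes of $G$-labelings. Hence the connected $G$-labeled generating function is $D_G(x,y)=\frac1q D(qx,y)$. Equivalence of labelings is defined componentwise, so a $G$-labeled simple quasi series-parallel matroid is exactly a set of connected $G$-labeled simple ones, and the labelings are independent across components. The exponential formula then gives
\[
S_G(x,y)=\exp\!\left(\tfrac1q D(qx,y)\right)=S(qx,y)^{1/q}=\left(\frac{1}{qx+1}A(\log(qx+1),y)\right)^{1/q}.
\]

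The only step that needs care is the first. We must justify that $S$ is the exponential of a connected part in the species sense, i.e.\ that simplicity decomposes over components, and that the footnoted convention (counting the empty matroid) makes $S(0,y)=1$ so that $\log S$ makes sense. Both follow from the facts that loops and parallel pairs lie within single connected components, and that the empty matroid is the empty set of components. The rest is formal.
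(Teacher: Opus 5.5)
Your proposal is correct and follows the paper's proof essentially verbatim. You take $\log S$ as the connected simple part, rescale by the $q^{n-1}$ labeling classes of a connected matroid to get $\frac{1}{q}D(qx,y)$, exponentiate, and substitute $S(x,y)=\frac{1}{1+x}A(\log(1+x),y)$. Your extra justifications, namely that simplicity is componentwise and that $S(0,y)=1$ once the empty matroid is counted, only make explicit what the paper leaves implicit.
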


\begin{proof}
Let
\(
T(x,y):=\log S(x,y).
\)
Since a simple quasi series-parallel matroid is a direct sum of connected simple
quasi series-parallel matroids, $T(x,y)$ is the exponential generating function
for connected simple quasi series-parallel matroids.

As in the proof of Proposition~\ref{prop:gen-fun-AG}, a connected matroid on $n$
elements has exactly $q^{n-1}$ distinct $G$-labelings. Hence the exponential
generating function for connected $G$-labeled simple quasi series-parallel
matroids is
\(
T_G(x,y)=\frac{1}{q}\,T(qx,y).
\)
Applying the exponential formula again, we obtain
\[
S_G(x,y)=\exp(T_G(x,y))
=\exp\!\left(\frac{1}{q}\log S(qx,y)\right)
=S(qx,y)^{1/q}.
\]
Finally, \cite[Proposition~2.14]{ferroni-larson} gives
\[
S(x,y)=\frac{1}{1+x}A(\log(1+x),y),
\]
and therefore
\[
S_G(x,y)
=S(qx,y)^{1/q}
=\left(\frac{1}{1+qx}A(\log(1+qx),y)\right)^{1/q},
\]
as claimed.
\end{proof}

\section{Proof of Theorem~\ref{thm:main}}

We now recall some properties of the flats of $\Q_n(G)$, following Dowling \cite[Theorem 2]{dowling}. Recall that a flat $F$ of $\Q_n(G)$ corresponds to a partial $G$-partition $(S, f, A_1, \dotsc, A_k)$. The rank of this flat is $n - k$. Let $n_i = |A_i|$, and set $n_0 = n - n_1 - \dotsb - n_k$.
The restriction of $\Q_n(G)$ to a flat is isomorphic to $\Q_{n_0}(G) \oplus \K_{n_1} \oplus \dotsb \oplus \K_{n_k}$. Indeed, let $A_0 = [n] \setminus (A_1 \sqcup \dotsb \sqcup A_k)$. Then an equivalence class of partial $G$-partitions which is less than or equal to $(S, f, A_1, \dotsc, A_k)$ corresponds to an equivalence class of partial $G$-partitions of $A_0$ and a partition of $A_i$ for $i = 1, \dotsc, k$. 

We call the integer $n_0$ and the multiset $\{n_1, \dotsc, n_k\}$ the  \emph{type} of the flat. Note that $\Gamma$ acts transitively on the flats of a particular type. 

The simplification of the contraction of a flat $F$ which is isomorphic to $\Q_{n_0}(G) \oplus \K_{n_1} \oplus \dotsb \oplus \K_{n_k}$ is isomorphic to $\Q_k(G)$. Indeed, giving an equivalence class of partial $G$-partitions which is greater than or equal to $(S, f, A_1, \dotsc, A_k)$ amounts to giving an element of $G$ for each of $A_1, \dotsc, A_k$ and merging some of the parts. After modding out by equivalence, this is the same thing as an equivalence class of partial $G$-partitions on $[k]$. 
There is an induced map from $\Gamma_F$, the stabilizer of $F$ in $\Gamma$, to $\operatorname{Aut}(\Q_k(G))$ whose image is the image of $(G \wr \mathfrak{S}_k) \rtimes \operatorname{Aut}(G)$ in $\operatorname{Aut}(\Q_k(G))$. That is, if $k\ge 3$, then $\Gamma_F$ surjects onto $\operatorname{Aut}(\Q_k(G))$ if $G$ is nontrivial, and its image is isomorphic to $\mathfrak{S}_k$ if $G$ is trivial. 

\begin{proof}[Proof of Theorem~\ref{thm:main}]
We induct on $n$. Let $P(t) \in \operatorname{VRep}(\Gamma)[t]$ be the generating function of the permutation representations of $\Gamma$ on $G$-labeled simple quasi series-parallel matroids of rank $n - i$ on $[n]$, and let $Z(t) \in \operatorname{VRep}(\Gamma)[t]$ be the generating function of the permutation representations of $\Gamma$ on $G$-labeled quasi series-parallel matroids of rank $n - i$ on $[n]$. By \cite[Proposition 2.10]{ferroni-larson}, a simple quasi series-parallel matroid on $[n]$ has rank greater than $n/2$, so the degree of $P(t)$ is less than $n/2$. As $Z(t)$ is palindromic because the dual of a quasi series-parallel matroid is quasi series-parallel, it suffices to verify that
\begin{equation}\label{eq:identity}
Z(t) = P(t) + \sum_{\substack{[F] \in \mathcal{L}(\Q_n(G))/\Gamma\\ [F]\neq \hat{0}}} t^{\operatorname{rk}(F)} \operatorname{Ind}^{\Gamma}_{\Gamma_F} P_{\Q_n(G)/F}(t).
\end{equation}
As mentioned above, the orbits of the action of $\Gamma$ on the lattice of flats of $\Q_n(G)$ are given by types. Choose a flat $F$ which is isomorphic to $\Q_{n_0}(G) \oplus \K_{n_1} \oplus \dotsb \oplus \K_{n_k}$, and let $[n] = S_0 \sqcup \dotsb \sqcup S_k$ be the corresponding partition. By induction, $P_{\Q_n(G)/F}(t)$ is the permutation representation of $\Gamma_F$ on $G$-labeled simple quasi series-parallel matroids on $[k]$; by \cite[Proposition 3.1]{ferroni-larson}, the equivariant Kazhdan--Lusztig polynomial is unchanged by simplification. 

Given a subgroup $K$ of $\Gamma_F$, the induction of the permutation representation of $\Gamma_F$ on $\Gamma_F/K$ to $\Gamma$ is the permutation representation of $\Gamma$ on $\Gamma/K$. Given an orbit of equivalence classes of $G$-labeled simple quasi series-parallel matroids of rank $k - i$ on $[k]$, we will produce an orbit of equivalence classes of $G$-labeled quasi series-parallel matroids of rank $k -  i = n - i - \operatorname{rk}(F)$ on $[n]$ with the same stabilizer. 

Because $\Gamma_F$ and $\Gamma$ act transitively on the set of equivalence classes of $G$-labelings of a fixed quasi series-parallel matroid on $[k]$ or $[n]$ elements, respectively, it suffices to construct a quasi series-parallel matroid on $[n]$ from a simple quasi series-parallel matroid on $[k]$. Given a quasi series-parallel matroid $\M$ of rank $k - i$ on $[k]$, construct a matroid $\tilde{\M}$ on $[n]$ with set of loops given by $S_0$ and parallel classes given by $S_1, \dotsc, S_k$ which simplifies to $\M$. Note that $\tilde{\M}$ is quasi series-parallel and has rank $k - i$. 

For each term on the right-hand side of \eqref{eq:identity}, we have produced a matching term on the left-hand side. Furthermore, every quasi series-parallel matroid on $[n]$ induces a partition $[n] = S_0 \sqcup \dotsb \sqcup S_k$ and a simple quasi series-parallel matroid on $[k]$, so this proves \eqref{eq:identity}. 
\end{proof}

\begin{example}
Let $n=3$. Then we can describe $\Q_n(G)$ as follows: it is a rank $3$ simple matroid with ground set $\{b_1, b_2, b_3\} \cup \{g_{i,j}\}_{1 \le i < j \le 3, \, g \in G}$. The rank $2$ flats are as follows:
\begin{enumerate}
\item There are three flats isomorphic to $\Q_2(G)$, given by $\{b_i, b_j\}  \cup \{g_{i,j}\}_{g \in G}$. 
\item There are $3q$ flats isomorphic to $\Q_1(G) \oplus \mathsf{K}_2$, given by $\{b_i, g_{j,k}\}$, where $i \not= j, k$ and $g \in G$. 
\item There are $q^2$ flats isomorphic to $\K_3$, given by $\{g_{12}, h_{23}, (g h)_{13}\}$ for $g, h \in G$. 
\end{enumerate}
See Figure~\ref{fig:n=3}. 
\begin{figure}[hb]
\begin{tikzpicture}[scale=0.85,auto=center,
    bvertex/.style={circle, fill=black, inner sep=2.2pt},
    epoint/.style={circle, fill=black, inner sep=1.6pt},
    upoint/.style={circle, fill=black, inner sep=1.6pt}
]
 
\coordinate (b1) at (0,0);
\coordinate (b2) at (7,0);
\coordinate (b3) at (3,6);
 
\coordinate (g12)  at ($(b1)!0.3!(b2)$);
\coordinate (x12)  at ($(b1)!0.7!(b2)$);
\coordinate (h23)  at ($(b2)!0.3!(b3)$);
\coordinate (y23)  at ($(b2)!0.7!(b3)$);
\coordinate (gh13) at ($(b3)!0.3!(b1)$);
\coordinate (xy13) at ($(b3)!0.7!(b1)$);
 
\coordinate (u12a) at ($(b1)!0.1!(b2)$);
\coordinate (u12b) at ($(b1)!0.5!(b2)$);
\coordinate (u12c) at ($(b1)!0.9!(b2)$);
\coordinate (u23a) at ($(b2)!0.1!(b3)$);
\coordinate (u23b) at ($(b2)!0.5!(b3)$);
\coordinate (u23c) at ($(b2)!0.9!(b3)$);
\coordinate (u13a) at ($(b3)!0.1!(b1)$);
\coordinate (u13b) at ($(b3)!0.5!(b1)$);
\coordinate (u13c) at ($(b3)!0.9!(b1)$);
 
\draw[thick] (b1) -- (b2) -- (b3) -- cycle;
 
\draw[thick]
    plot[smooth cycle, tension=0.78]
    coordinates {(g12) (h23) (gh13)};
 
\draw[thick]
    plot[smooth cycle, tension=0.78]
    coordinates {(x12) (y23) (xy13)};
 
\node[bvertex, label={below left:$b_1$}]  at (b1) {};
\node[bvertex, label={below right:$b_2$}] at (b2) {};
\node[bvertex, label={above:$b_3$}]       at (b3) {};
 
\node[epoint, label={below:$g_{12}$}]        at (g12)  {};
\node[epoint, label={below:$x_{12}$}]        at (x12)  {};
\node[epoint, label={right:$h_{23}$}]        at (h23)  {};
\node[epoint, label={right:$y_{23}$}]        at (y23)  {};
\node[epoint, label={left:$(gh)_{13}$}]      at (gh13) {};
\node[epoint, label={left:$(xy)_{13}$}]      at (xy13) {};
 
\node[upoint] at (u12a) {};
\node[upoint] at (u12c) {};
 
\node[upoint] at (u23a) {};
\node[upoint] at (u23c) {};
 
\node[upoint] at (u13a) {};
\node[upoint] at (u13c) {};
 
\end{tikzpicture}
\caption{The Dowling geometry $\Q_3(G)$.}\label{fig:n=3}
\end{figure}
The coefficient of $t^{\operatorname{rk}(\M) - 1}$ in $Z_{\M}(t)$ is the number of hyperplanes. 
The first group of flats correspond to the equivalence classes of $G$-labelings of the rank $1$ quasi series-parallel matroids isomorphic to $\U_{0,2} \oplus \U_{1,1}$. The second group corresponds to $\U_{0,1} \oplus \U_{1,2}$. The third group corresponds to $\U_{1,3}$. The coefficient of $t$ in $P_{\mathrm{M}}(t)$ is the number of hyperplanes minus the number of atoms, which is equal to the number of flats isomorphic to $\K_3$. 
In \cite[Section 5]{dowling}, Dowling extends the definition of $\Q_3(G)$ to a more general algebraic object called a \emph{quasigroup}. The above analysis gives a description of the $Z$- and Kazhdan--Lusztig polynomial for Dowling geometries of quasigroups. 
\end{example}

\begin{example}
We can use Theorem~\ref{thm:main} to give a simple formula for the leading coefficient of $P_{\Q_n(G)}(t)$ when $n = 2m-1$ is odd. A simple quasi series-parallel matroid is a direct sum of connected simple quasi series-parallel matroids. As a simple quasi series-parallel matroid on $[n]$ has rank greater than $n/2$, this implies that a simple quasi series-parallel matroid of rank $m$ on $[2m-1]$ is connected. It was shown in \cite[Corollary 2.12]{ferroni-larson} that there are $(2m-3)!! (2m-1)^{m-2}$ simple quasi series-parallel matroids of rank $m$ on $[2m-1]$, so the coefficient of $t^{m-1}$ in $P_{\Q_{2m-1}(G)}(t)$ is $q^{2m-2} (2m-3)!! (2m-1)^{m-2}$. 

A simple quasi series-parallel matroid of rank $m+1$ on $[2m]$ has either one or two connected components. Connected simple quasi series-parallel matroids of rank $m+1$ on $[2m]$ were enumerated in \cite[Corollary 1.6]{gao-proudfoot-yang-zhang}, so Theorem~\ref{thm:main} can be used to give an explicit formula for the leading coefficient of $P_{\Q_{2m}(G)}(t)$, but the resulting formula is not very clean. See \cite[Proposition 2.13]{ferroni-larson}. 
\end{example}

\section{Final Remarks}

\subsection{Inverse Kazhdan--Lusztig polynomials}

In \cite{gao-xie} Gao and Xie introduced \emph{inverse Kazhdan--Lusztig polynomials} of matroids, and Proudfoot introduced an equivariant generalization in \cite{Equivariant}. The inverse Kazhdan--Lusztig polynomial of $\M$ can be computed (up to a sign) by inverting the Kazhdan--Lusztig function in the incidence algebra of $\mathcal{L}(\M)$ and specializing to the interval $[\widehat{0},\widehat{1}]$. 

Braden, Huh, Matherne, Proudfoot, and Wang proved in \cite[Theorem~1.5]{braden-huh-matherne-proudfoot-wang2} that inverse Kazhdan--Lusztig polynomials of matroids have nonnegative coefficients. Furthermore, they proved that the coefficients of the equivariant inverse Kazhdan--Lusztig polynomial associated to $H \curvearrowright \M$ are isomorphism classes of honest, rather than virtual, representations of $H$.

The following remains an unsolved problem that we regard as especially interesting.

\begin{problem}
    Find a combinatorial interpretation for the coefficients of the inverse Kazhdan--Lusztig polynomial of Dowling geometries.
\end{problem}

The above question is open even in the case of braid matroids. However, in this special case some progress has been made: Gao, Proudfoot, Yang, and Zhang \cite{gao-proudfoot-yang-zhang} interpreted the \emph{leading} coefficient of the inverse Kazhdan--Lusztig polynomial of $\K_n$.

An analogous question can be formulated about the inverse $Z$-polynomial introduced in \cite{FMSV}, recently studied by Gao, Ruan, and Xie \cite{gao-ruan-xie} and equivariantly by Gao, Li, and Xie \cite{gao-li-xie}.

\subsection{Real-rootedness}

Two striking conjectures by Gedeon, Proudfoot, and Young \cite{gedeon-proudfoot-young-survey} and by Proudfoot, Xu, and Young \cite{PXY} assert that, for every matroid $\M$, the polynomials $P_{\M}(t)$ and $Z_{\M}(t)$ have only nonpositive real zeros. The results in the present paper can be used to test this conjecture.

From our generating functions, it is straightforward to deduce that the polynomials $P_{\Q_n(G)}(t)$ have integer coefficients after the change of variables $t = t/q^2$. Furthermore, one can compute them explicitly for relatively small values of $n$:

\begin{equation}   P_{\Q_n(G)}\left(t/q^2\right) = \begin{cases}
        1 & n = 1, \\
1 & n = 2, \\
t + 1 & n = 3, \\
{\left(q + 4\right)} t + 1 & n = 4, \\
15 t^{2} +{\left(q^{2} + 5 q + 10\right)} t +  1 & n = 5, \\
25 {\left(3 q + 4\right)} t^{2} + {\left(q^{3} + 6 q^{2} + 15 q + 20\right)} t + 1 & n = 6, \\ 735 t^{3} +
35 {\left(8 q^{2} + 16 q + 11\right)} t^{2} &\\+ {\left(q^{4} + 7 q^{3} + 21 q^{2} + 35 q + 35\right)} t + 1 & n = 7, \\
105 {\left(89 q + 64\right)} t^{3} + 7 {\left(134 q^{3} + 333 q^{2} + 340 q + 160\right)} t^{2} \\ 
\qquad + {\left(q^{5} + 8 q^{4} + 28 q^{3} + 56 q^{2} + 70 q + 56\right)} t + 1 & n = 8.
\end{cases} \label{eq:kl-small-n}
\end{equation}

Similar computations can be made for the $Z$-polynomial of $\Q_n(G)$ for relatively small $n$, except that $Z_{\Q_n(G)}(t/q^2)$ does not have integer coefficients.

\begin{proposition}
    For all $n\leq 15$ and every group $G$, the polynomials $P_{\Q_n(G)}(t)$ and $Z_{\Q_n(G)}(t)$ are real-rooted. 
\end{proposition}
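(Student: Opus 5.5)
The plan is a finite computation. By Theorem~\ref{thm:main}, $P_{\Q_n(G)}(t)$ and $Z_{\Q_n(G)}(t)$ depend on $G$ only through $q=|G|$. Their coefficients are polynomials in $q$, read off from the generating functions of Propositions~\ref{prop:gen-fun-AG} and~\ref{prop:gen-fun-S}. Concretely, I would first expand $C(x,y)$ from \eqref{eq:C-gen-fun} as a power series up to $x^{15}$. For this I would compute the compositional inverse of $\frac1y\log(1+xy)+\log(1+x)-x$ term by term with polynomial-in-$y$ coefficients, which is routine Lagrange inversion. I would then form $A=\exp(C)$ and $S(x,y)=\frac{1}{1+x}A(\log(1+x),y)$, and finally $A_G=A(qx,y)^{1/q}$ and $S_G=S(qx,y)^{1/q}$.

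The exponent $1/q$ is handled by writing $A(qx,y)^{1/q}=\exp\bigl(\tfrac1q C(qx,y)\bigr)$, and similarly for $S$. This shows each coefficient of $x^n y^k/n!$ is a polynomial in $q$ with rational coefficients. The coefficient of $t^i$ in $Z_{\Q_n(G)}$ is the coefficient of $y^{n-i}$, and likewise for $P$ using $S_G$. Substituting $t\mapsto t/q^2$ as in \eqref{eq:kl-small-n}, this yields for each $n\le 15$ a pair of polynomials $P_n(t;q)$ and $Z_n(t;q)$ in $\mathbb{Q}[q][t]$.

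The main obstacle is that real-rootedness must hold for \emph{every} positive integer $q$, not just finitely many, so a pure numerical check does not suffice. For each fixed $n$, I would split the range of $q$ in two.

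\textbf{Large $q$.} Here I would argue asymptotically. After a suitable rescaling of $t$ by a power of $q$ for each coefficient (the Newton polygon in $(q,t)$), the polynomial factors at leading order into distinct real roots. Real simple roots persist under small perturbation, so a Sturm or discriminant argument certifies real-rootedness for all $q\ge q_0(n)$ with an explicit $q_0$.

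\textbf{Small $q$.} For each $1\le q<q_0$, I would verify real-rootedness exactly by computer, counting real roots with Sturm sequences in exact rational arithmetic.

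An alternative for both regimes, which I would try first, is to compute the discriminant of $P_n(t;q)$ in $t$ as a polynomial in $q$. If it has no positive integer root at which the number of real roots could change, then checking one value of $q$ plus continuity in $q$ settles all $q\ge1$ simultaneously. The same procedure applies to $Z_n$. Nonpositivity of the roots is automatic, since all coefficients are positive.
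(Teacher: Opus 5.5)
Your plan is viable, but it handles ``all $q$'' differently from the paper. The paper never treats $P_{\Q_n(G)}$ or $Z_{\Q_n(G)}$ in isolation. It proves the stronger fact that, for $n\le 14$, $P_n(t)=P_{\Q_n(G)}(t/q^2)$ strictly interlaces $P_{n+1}$ and $Z_n$ strictly interlaces $Z_{n+1}$. The tool is the criterion that $g$ strictly interlaces $f$ if and only if the B\'ezoutian $B(f,g)$ is positive definite; positive definiteness forces both $f$ and $g$ to have simple real zeros, which gives real-rootedness for all $n\le 15$. The paper computes $B(P_{n+1},P_n)$ and $B(Z_{n+1},Z_n)$ symbolically in $q$ and checks that every minor is a polynomial in $q-1$ with positive coefficients. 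That single algebraic certificate covers every real $q\ge 1$ at once, with no regimes, no perturbation analysis and no root isolation in $q$, and it yields interlacing and total positivity as a bonus.

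Your discriminant-plus-continuity route gives only real-rootedness and requires locating the real zeros of a high-degree discriminant in $q$. It would nevertheless succeed, because the paper's certificate shows the zeros are simple for every real $q\ge 1$, so the discriminant cannot vanish on $[1,\infty)$. Your large-$q$ Newton-polygon step is the weakest part. That the leading-order roots are real and distinct is not automatic: for $Z_n$ the leading-order factor is essentially the rank generating polynomial of connected series-parallel matroids on $[n]$, and for $P_n$ the analogous polynomial for simple ones. This must itself be verified for each $n$, and a degenerate case would need higher-order terms.

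One point to fix: the continuity argument needs the discriminant, and the leading coefficient in $t$, to be nonzero for every real $q\in[1,\infty)$, not merely at positive integers. The number of real roots can change at a non-integer $q$ between two integers. So either certify that the discriminant has no real zero on $[1,\infty)$, for instance by Descartes' rule after substituting $q=1+s$ or by a Sturm sequence in $q$. Or sample one value of $q$ in each interval between consecutive real zeros that contains an integer, and check any integer zeros directly. The substitution $q=1+s$ is in effect what the paper's ``positive coefficients in $q-1$'' certificate does in one stroke.
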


The proof of this result is a direct computation that in fact yields a stronger fact: for each $n\leq 14$, the polynomial $P_{\Q_n(G)}(t/q^2)$ interlaces $P_{\Q_{n+1}(G)}(t/q^2)$ and, similarly, $Z_{\Q_{n}(G)}(t)$ interlaces $Z_{\Q_{n+1}(G)}(t)$.

To see this, a useful (finite) criterion for interlacing is given by the B\'ezoutian\footnote{We learned of this idea thanks to a talk given by Nick Proudfoot at Banff in March 2023.}. Consider two polynomials $f(x),g(x)\in \mathbb{R}_{\geq 0}[x]$ of degrees $d$ and $e\in \{d,d-1\}$. The \emph{B\'ezoutian}, or the \emph{B\'ezout matrix}, associated to the pair $(f,g)$ is the symmetric matrix
    \[ B(f,g) := \left( a_{ij}\right)_{0\leq i,j \leq d-1}, \quad \text{ where $a_{ij}= [x^iy^j]\left(\frac{f(x)g(y)-f(y)g(x)}{x-y}\right)$}. \]
A classical result in the theory of interlacing polynomials (see \cite[Section~2]{krein-naimark}) asserts that $g$ (strictly) interlaces $f$ if and only if $B(f,g)$ is positive definite. 

For a fixed group $G$ of size $q$, let us denote $P_n(x) = P_{\Q_n(G)}(x/q^2)$ and $Z_n(x) = Z_{\Q_n(G)}(x)$. We have verified with the help of a computer that the following holds.

\begin{proposition}\label{prop:bezoutian-totally-positive}
    For each $n\leq 14$, the matrices $B(P_{n+1},P_{n})$ and $B(Z_{n+1},Z_n)$ are totally positive. Moreover, all the minors of these matrices are polynomials in $q-1$ with positive coefficients. 
\end{proposition}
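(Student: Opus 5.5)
The plan is to reduce the statement to finitely many explicit polynomial identities in $q$ and then check them by computer algebra. Nothing structural is needed beyond Theorem~\ref{thm:main} and the generating functions of Propositions~\ref{prop:gen-fun-AG} and~\ref{prop:gen-fun-S}.

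First, by Theorem~\ref{thm:main}, the coefficient of $t^i$ in $P_{\Q_n(G)}(t)$ is $\sum q^{n-c(\M)}$ over simple quasi series-parallel $\M$ on $[n]$ of rank $n-i$. The coefficient of $t^i$ in $Z_{\Q_n(G)}(t)$ is the same sum over all quasi series-parallel matroids of rank $n-i$. Hence both polynomials depend only on $q$. I would compute them for $n\le 15$ as elements of $\mathbb{Z}[q][t]$ by extracting coefficients from $S(qx,y)^{1/q}$ and $A(qx,y)^{1/q}$. Concretely:
\begin{enumerate}[\normalfont(1)]
\item Expand $C(x,y)$ from \eqref{eq:C-gen-fun} as a truncated power series. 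This requires computing the compositional inverse of $\frac{1}{y}\log(1+xy)+\log(1+x)-x$ term by term in $x$, with coefficients in $\mathbb{Q}[y]$.
\item Form $\exp(\tfrac1q C(qx,y))$. For $S$, use $\log S=\log A(\log(1+x),y)-\log(1+x)$, so $S_G=\exp\bigl(\tfrac1q[C(\log(1+qx),y)-\log(1+qx)]\bigr)$.
\end{enumerate}
Working with the connected parts avoids fractional powers entirely. The substitution $t\mapsto t/q^2$ is justified combinatorially: the coefficient of $t^i$ in $P$ involves matroids with $n-i$ rank and at most $n-2i$ components, so $q^{n-c}$ is divisible by $q^{2i}$. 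This makes $P_n(x)\in\mathbb{Z}[q][x]$, as in \eqref{eq:kl-small-n}.

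Next, for each $n\le 14$, I would form the B\'ezoutians $B(P_{n+1},P_n)$ and $B(Z_{n+1},Z_n)$ symbolically. Their entries lie in $\mathbb{Q}[q]$, and I would substitute $q=1+s$. Total positivity for all $q\ge 1$ (and in particular for $q$ a group order) follows if every minor, expanded as a polynomial in $s$, has nonnegative coefficients and is not identically zero. I would compute all minors symbolically and inspect their coefficients.

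Real-rootedness then follows from positive definiteness: principal minors are positive, so by the Krein--Naimark criterion each $P_n$ interlaces $P_{n+1}$, and likewise for $Z$. Real-rootedness propagates up from the trivially real-rooted small cases to all $n\le 15$.

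The main obstacle is computational cost. The matrices are at most about $8\times 8$ for $P$ and $15\times15$ for $Z$, and the number of minors of a $15\times 15$ matrix is around $\binom{30}{15}\approx 1.5\times10^8$. Each minor is a polynomial in $s$ of sizeable degree. To keep this tractable I would first try to reduce to contiguous (initial) minors: for a symmetric matrix, positivity of all contiguous minors implies total positivity, by the Fekete-type criterion. If that reduction fails for polynomial entries, the fallback is to exploit the Hankel-like structure of B\'ezoutians and compute minors via fraction-free elimination over $\mathbb{Z}[s]$.
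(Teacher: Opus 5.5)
Your overall approach is the paper's. There the statement is established only by a direct computer verification, with $P_n$ and $Z_n$ computed from the generating functions. Your way of producing $P_n,Z_n\in\mathbb{Z}[q][x]$ is sound, including the divisibility argument justifying $t\mapsto t/q^2$. The problem is the shortcut you propose to keep the computation tractable. It does not prove the statement as written.

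The Fekete (or Gasca--Pe\~na) criterion is a pointwise statement: for each fixed $q\ge 1$, positivity of the contiguous (or initial) minors forces positivity of all minors. That handles the first sentence. The second sentence, however, asserts that \emph{every} minor, as a polynomial in $s=q-1$, has positive coefficients, and this does not transfer. In Fekete's argument a non-contiguous minor is recovered from contiguous ones by identities that involve division. For example, write $\Delta_{ij}$ for the minor on rows $i,j$ and columns $1,2$. Then $a_{21}\Delta_{13}=a_{11}\Delta_{23}+a_{31}\Delta_{12}$. A polynomial quotient of positive-coefficient polynomials can have negative coefficients, as $(s^3+1)/(s+1)=s^2-s+1$ shows. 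So for the ``Moreover'' clause you really have to expand all $\binom{2d}{d}$ minors of the $d\times d$ B\'ezoutian, or find some other certificate that is itself coefficientwise in $s$. Fraction-free elimination only makes each determinant cheaper; it does not reduce how many you need.

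A smaller slip: ``nonnegative coefficients in $s$ and not identically zero'' only gives strict positivity for $q>1$. At $q=1$ (the trivial group, i.e.\ the braid matroid, which the statement covers) such a minor could vanish. You need the constant term in $s$ to be positive, which is exactly what positivity of all coefficients provides.
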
   

Since total positivity is a (much) stronger condition than positive-definiteness, this shows that our polynomials are real-rooted and interlace in a very strong sense.

\begin{example}
    Let us illustrate Proposition~\ref{prop:bezoutian-totally-positive}. For $n=5$ we have: 
    \[B(P_6,P_5) = \begin{pmatrix}
    q^{3}+5q^{2}+10q+10 & 75q+85 \\
    75q+85 & 60q^{3}+385q^{2}+1025q+700
\end{pmatrix} 
    \]
    For each $q\geq 1$, this matrix is totally positive because it has strictly positive entries and its determinant is equal to
    \begin{align*}
        \det B(P_6,P_5) &= 60 \, q^{6} + 685 \, q^{5} + 3550 \, q^{4} + 10275 \, q^{3} + 11975 \, q^{2} + 4500 \, q - 225\\
            &= 60 \, (q-1)^{6} + 1045 \, (q-1)^{5} + 7875 \, (q-1)^{4} + 32525 \, (q-1)^{3} + \\
            &\qquad 71850 \, (q-1)^{2} + 77260 \, (q-1) + 30820.
    \end{align*}
\end{example}

The threshold of $n\leq 15$ in our results can likely be pushed further with additional computational power. It is unclear, however, how to exploit this idea to prove the real-rootedness of $P_{\Q_n(G)}(x)$ and $Z_{\Q_n(G)}(x)$ for arbitrary $n$ (and arbitrary $q$).

\begin{remark}
    It is tempting to ask about the meaning of the coefficients and the general behavior of the polynomial appearing in equation~\eqref{eq:kl-small-n} after substituting $q = 0$. Note that $P_7(x) = 1 +35 x+ 385x^2 + 735x^3$ is not real-rooted.
\end{remark}

\subsection{Intersection cohomology of Dowling lattices}

One of the main results of \cite{braden-huh-matherne-proudfoot-wang2} states that $Z_{\M}(t)$ is the Poincar\'{e} polynomial of a certain graded vector space denoted $\IH(\M)$, which is called the \emph{intersection cohomology} of $\M$. This graded vector space is a module over the graded M\"obius algebra $\H(\M)$. Theorem~\ref{thm:main} suggests that $\IH(\Q_n(G))$ has a basis indexed by equivalence classes of $G$-labelings of quasi series-parallel matroids. 
Note that each flat of rank $k$ of $\Q_n(G)$ naturally gives rise to a $G$-labeled quasi series-parallel matroid on $[n]$ of rank $n-k$: if the partition is $S = A_1 \sqcup \dotsb \sqcup A_{n-k}$, then we take the direct sum of uniform matroids of rank $1$ on $A_1, \dotsc, A_{n-k}$ with loops on $[n] \setminus S$.  
Dowling showed that, if $n \ge 3$, then $\Q_n(G)$ is realizable over a field if and only if $G$ is cyclic \cite[Theorem 11]{dowling}, so this would give an explicit understanding of the intersection cohomology of some non-realizable matroids. 

After the publication of \cite{ferroni-larson}, several people attempted to construct a basis for $\IH(\K_{n+1})$ indexed by quasi series-parallel matroids. We recount one suggestion by June Huh which has some interesting consequences. Because $\K_{n+1}$ is a binary matroid, it is a deletion of the binary full projective geometry $\mathbb{P}_{\mathbb{F}_2}^{n-1}$. This implies that $\IH(\mathbb{P}^{n-1}_{\mathbb{F}_2})$ is a module over $\H(\K_{n+1})$, and that $\IH(\K_{n+1})$ is an $\H(\K_{n+1})$-module summand. 

Because the matroid $\mathbb{P}^{n-1}_{\mathbb{F}_2}$ is modular, $\IH(\mathbb{P}^{n-1}_{\mathbb{F}_2}) = \H(\mathbb{P}^{n-1}_{\mathbb{F}_2})$, and so it has a basis labeled by subspaces of $\mathbb{F}_2^{n}$. Because binary matroids are uniquely representable, each quasi series-parallel matroid on $[n]$ gives rise to an element of $\IH(\mathbb{P}^{n-1}_{\mathbb{F}_2})$. However, the subspace of $\IH(\mathbb{P}^{n-1}_{\mathbb{F}_2})$ generated by classes of quasi series-parallel matroids is not preserved by the action of $\H(\K_{n+1})$ for $n \ge 7$. 

Because all binary matroids of rank at most $2$ are quasi series-parallel, $\IH^i(\K_{n+1}) = \IH^i(\mathbb{P}^{n-1}_{\mathbb{F}_2})$ for $i \le 2$. 
There is an action of $\mathfrak{S}_{n+1}$ on $\IH(\mathbb{P}^{n-1}_{\mathbb{F}_2})$ because we can view $\mathbb{F}_2^n$ as the standard representation of $\mathfrak{S}_{n+1}$. This implies that $\IH^i(\K_{n+1})$ is a permutation representation of $\mathfrak{S}_{n+1}$ for $i \le 2$.

\bibliographystyle{amsalpha}
\bibliography{bibliography}
\end{document}